\documentclass[12pt,oneside]{amsart}
\setlength{\textwidth}{15cm} \setlength{\evensidemargin}{0cm}
\setlength{\oddsidemargin}{0.5cm}
\setlength{\textheight}{23cm}
\setlength{\voffset}{-1cm}
\usepackage{amsmath}
\usepackage{amsfonts}
\usepackage{amsthm}
\usepackage{amssymb}
\usepackage[all]{xy}
\pagestyle{plain} \SelectTips{cm}{}

\title{A case of monoidal uniqueness of algebraic models}

\author{Constanze Roitzheim}
\email{C.Roitzheim@kent.ac.uk}

\address{C. Roitzheim \\ University of Kent \\ School of Mathematics, Statistics and Actuarial Science \\ Canterbury \\ Kent CT2 7NF,
UK}

\subjclass[2010]{18G55, 55P42}

\date{$10^\text{th}$ July 2013}

\DeclareMathOperator{\Ho}{Ho}
\DeclareMathOperator{\End}{End}

\DeclareMathOperator{\Sp}{Sp}

\newcommand{\Zp}{\mathbb{Z}_p}
\newcommand{\C}{\mathcal{C}}
\newcommand{\D}{\mathcal{D}}

\newcommand{\LS}{{L_{K(1)}\mathbb{S}}}

\newtheorem{theorem}{Theorem}[section]
\newtheorem{proposition}[theorem]{Proposition}
\newtheorem{corollary}[theorem]{Corollary}
\newtheorem{lemma}[theorem]{Lemma}

\newtheorem*{theorem*}{Theorem}

\newtheorem{definition}[theorem]{Definition}

\newtheorem*{thm}{Theorem}

\newcommand{\lradjunction}{\,\,\raisebox{-0.1\height}{$\overrightarrow{\longleftarrow}$}\,\,}

\addtolength{\parskip}{2mm}

\begin{document}
\begin{abstract}
We prove that there is at most one algebraic model for modules over the $K(1)$-local sphere at odd primes that retains some monoidal information.
\end{abstract}
\maketitle

\section*{Introduction}

For the last few decades, the stable homotopy category $\Ho(\Sp)$ has been studied using chromatic localisations. The $K(n)$-local stable homotopy categories \linebreak $\Ho(L_{K(n)}\Sp)$ and $E(n)$-local stable homotopy categories $\Ho(L_{E(n)}\Sp)$ contain a wealth of information on the finer structure of the stable homotopy category itself. More recently, the question has been asked where and how the higher homotopy information of spectra or $E$-local spectra is encoded. For example, Schwede showed in \cite{Sch07} that all higher homotopy information of spectra lies in the triangulated structure of $\Ho(\Sp)$ itself, i.e. the stable homotopy category is \emph{rigid}. But what about rigidity for chromatic localisations? 

The author showed that the $K$-local stable homotopy category at $p=2$ is rigid, providing the first example of chromatic rigidity. However, at odd primes the situation is already very different as Franke provided a potential counterexample in \cite{Fra96}. He constructed an algebraic model category whose homotopy category is equivalent to $\Ho(L_{K_{(p)}}\Sp)$ but which has very different homotopical behaviour. And indeed, it is still mysterious in general whether a piece of the stable homotopy category can have an algebraic model or whether it might in fact be rigid.

Since then it has been a subject of investigation to see if certain homotopy categories of modules over a fixed ring spectrum are rigid or possess an exotic model. In \cite{Hut12} Hutschenreuter showed a version of rigidity for $ko$-module spectra. Patchkoria modified Franke's construction to obtain algebraic models for modules over the truncated Brown-Petersen spectrum for certain $n$ and $p$, modules over $E(n)$ for certain $n$ and $p$, $KU$, $k(n)$ and $ko_{(p)}$ for $p$ odd \cite{Pat12}. 

Another interesting question arising from this is the uniqueness of exotic models, specifically algebraic ones. Can there be more than one exotic model for a given homotopy category? Are all such exotic models algebraic? The result of this paper adds to the known landscape of rigidity vs. algebraic models. We study the case of modules over the $K(1)$-local sphere for odd primes and show that there can be at most one algebraic model retaining some monoidal information. 

\begin{thm}
Let $$\Phi: \Ho((\LS)\mbox{-mod}) \longrightarrow \Ho(\C)$$ be a pre-monoidal equivalence with $\C$ an algebraic model category. Then $\C$ is unique: any other algebraic model category $\D$ with a pre-monoidal triangulated equivalence $$\Psi: \Ho((\LS)\mbox{-mod}) \longrightarrow \Ho(\D)$$ is Quillen equivalent to $\C$. 
\end{thm}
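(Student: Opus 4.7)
The plan is to apply Morita theory for algebraic stable model categories and to reduce the uniqueness question to an intrinsic formality statement for the graded ring $\pi_*(\LS)$.

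First, invoke the version of Dugger--Shipley's Morita theorem for algebraic stable model categories, together with Shipley's comparison between $H\mathbb{Z}$-algebra spectra and DGAs: any such category with a compact generator is Quillen equivalent to the model category of right modules over a DGA. Applied to $\C$ with the compact generator $\Phi(\LS)$, and to $\D$ with the compact generator $\Psi(\LS)$, this produces DGAs $R_\C$ and $R_\D$ together with Quillen equivalences $\C \simeq \text{Mod-}R_\C$ and $\D \simeq \text{Mod-}R_\D$. The problem then reduces to exhibiting a zigzag of quasi-isomorphisms between $R_\C$ and $R_\D$.

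Next, use the pre-monoidal hypothesis to identify the underlying graded rings. Since $\LS$ is simultaneously the monoidal unit and a compact generator of $\LS\mbox{-mod}$, pre-monoidality of $\Phi$ and $\Psi$ forces $\Phi(\LS)$ and $\Psi(\LS)$ to correspond to the respective monoidal units of $\C$ and $\D$, and the induced isomorphisms on graded endomorphism rings give
\[
H_*(R_\C) \;\cong\; \pi_*(\LS) \;\cong\; H_*(R_\D).
\]
The theorem is thus reduced to the statement that $\pi_*(\LS)$ is \emph{intrinsically formal} at odd primes: every DGA whose cohomology ring is isomorphic to $\pi_*(\LS)$ must be quasi-isomorphic to $\pi_*(\LS)$ equipped with trivial differential.

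The main obstacle is establishing this intrinsic formality. The strategy is obstruction theory for $A_\infty$-structures on a graded ring: one inductively rectifies a given DGA towards its cohomology, with successive obstructions to extending the $m_n$ operations lying in specific bidegrees of the Hochschild cohomology $HH^{*,*}(\pi_*(\LS),\pi_*(\LS))$. Exploiting the explicit description of $\pi_*(\LS)$ at odd primes, as a polynomial ring on a Bott-type generator tensored with an exterior generator of degree $-1$, should reduce these obstruction computations to a tractable calculation parallel to those underlying Franke's and Patchkoria's constructions of algebraic models. Once the relevant obstruction groups are shown to vanish, both $R_\C$ and $R_\D$ are quasi-isomorphic to the formal DGA on $\pi_*(\LS)$ and hence to each other, which by Morita theory yields the required Quillen equivalence between $\C$ and $\D$.
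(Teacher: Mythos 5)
Your reduction to intrinsic formality of $\pi_*(\LS)$ is where the argument breaks: that statement is false, and the paper's main computation goes in exactly the opposite direction. At an odd prime the graded ring $\pi_*(\LS)$ has \emph{trivial} multiplication outside degrees $0$ and $-1$ for degree reasons, so the formal dga (homology with zero differential) has all triple Massey products containing $0$. But the Toda brackets of $\LS$ satisfy $\left<\alpha_i,p,\alpha_j\right>=\alpha_{i+j}\neq 0$ with zero indeterminacy, and these are transported to Massey products of the endomorphism dga by the triangulated equivalence. Hence the endomorphism dgas are \emph{not} quasi-isomorphic to the formal model, the relevant obstruction groups you hope to kill do not vanish, and the correct uniqueness statement is that every commutative dga with this homology \emph{and these Massey products} is quasi-isomorphic to the non-formal dga $C=\Zp[x,x^{-1}]\otimes\Lambda_{\Zp}(e)$ with $d(x)=pe$. (Your description of $\pi_*(\LS)$ as polynomial tensor exterior is the underlying algebra of this $C$, not its homology.) Your proposal never invokes the Toda bracket data, and without it the graded ring alone cannot pin down the quasi-isomorphism type.

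A second, smaller gap concerns what pre-monoidality is actually for. The isomorphism $H_*(R_\C)\cong\pi_*(\LS)$ of graded rings follows from any triangulated equivalence, since $[X,X]_*^{\C}\cong[\LS,\LS]_*$; you do not need pre-monoidality for that. The hypothesis is needed because the uniqueness computation is only known for \emph{commutative} dgas: pre-monoidality forces $\Phi(\LS)$ to be the monoidal unit of $\C$, whose endomorphism dga is commutative, and likewise for $\D$. Your outline would need to either restrict to commutative dgas via this observation or carry out the (currently unavailable) associative version of the uniqueness argument.
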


We do not know if $\Ho((L_{K(1)}\mathbb{S})\mbox{-mod})$ possesses an algebraic model at all, but if it does, then it is unique. We hope that the methods and results that we present will provide a stepping stone towards other uniqueness results in the future.

\bigskip
This paper is organised as follows. In Section \ref{sec:background} we recall some basic techniques including Morita theory for stable model categories and algebraic model categories. This reduces the study of algebraic models to the study of endomorphism differential graded algebras (=dgas). In Section \ref{sec:rigidity} we describe the set-up for this paper, particularly the algebraic structure we would like to capture. We explain how it is our goal to show that there is only one commutative dga whose homology groups and Massey products agree with the homotopy groups and Toda brackets of the $K(1)$-local sphere. Section \ref{sec:postnikov} contains a helpful technical result that we will use in our main computation. This main computation is contained in Section \ref{sec:main}: we show that there is only one dga that has indeed the right algebraic information. We finish with some conclusions and further directions in Section \ref{sec:conclusions}.

\bigskip
{\bf Acknowledgments:} The author would like to thank David Barnes and Brooke Shipley for helpful and motivating discussions.

\section{Stable model categories and algebraic model categories }\label{sec:background}

In this section we are going to re-introduce some model category techniques, especially focussing on stable model categories, Morita theory for stable model categories, algebraic model categories and Eilenberg-MacLane spectra. 

\subsection{Background}

We assume that the reader is familiar with the basic constructions concerning model categories such as the homotopy category, Quillen functors and Quillen equivalences. Excellent references are provided by \cite{DwySpa95} and \cite{Hov99}. 

Recall that a pointed model category $\C$ is \emph{stable} if the adjoint loop and suspension functors
\[
\Sigma: \Ho(\C) \lradjunction \Ho(\C): \Omega
\]
are equivalences of categories. For example, $\C= Ch(\mathcal{A})$ (unbounded chain complexes in an abelian category $\mathcal{A}$) is a stable model category whereas the category of topological spaces is not. The most important feature of a stable model category is that its homotopy category is triangulated. Furthermore, Quillen functors between stable model categories induce functors of the respective homotopy categories that are compatible with the triangulated structure.

Throughout the paper we also assume that all our model categories are proper and cellular. This is only a minor technical restriction, see e.g. \cite[Section 8]{BarRoi13} for a justification.

Another prime example of a stable model category is the model category of symmetric spectra, $\Sp$. In this paper we also consider \emph{Bousfield localisations} of spectra with respect to a generalised homology theory $E$, see e.g. \cite[Section 1]{BarRoi11b}. The category of spectra with the $E$-local model structure is denoted $L_E\Sp$. Its resulting homotopy category, the \emph{$E$-local stable homotopy category}, is obtained from the stable homotopy category $\Ho(\Sp)$ by formally inverting the $E_*$-isomorphisms of spectra. 

Hence, the $E$-local stable homotopy category is especially sensitive to phenomena related to the homology theory $E$. This becomes useful when considering certain homology theories employed to study the internal structure of the stable homotopy category itself, such as the Morava-$K$-theories $K(n)$ and the Johnson-Wilson homology theories $E(n)$. Those homology theories are related to nilpotency and periodicity phenomena  as well as other structural results such as the thick subcategory theorem (which says that the $K(n)$ essentially provide the ``atomic'' localisations of $\Ho(\Sp)$) and the Chromatic Convergence Theorem (which states that for bigger and bigger $n$, the $E(n)$-local stable homotopy category provides a better and better approximation of $\Ho(\Sp)$ itself), see e.g. \cite{HopSm98} \cite{Rav92}. 

\subsection{Morita theory}

Morita theory for stable model categories has proved to be an extremely powerful and valuable way to study and classify stable model categories with a single compact generator. A \emph{generator} of a stable model category is an object detecting isomorphisms in $\Ho(\C)$. An object $X$ of $\Ho(\C)$ is called \emph{compact} if the functor $[X, -]^\C$ commutes with arbitrary coproducts. (Here, $[-,-]^\C$ denotes the set of morphisms in $\Ho(\C)$.) Without loss of generality we assume our generators to be fibrant and cofibrant. If a model category has a compact generator $X$, it means that the entire homotopy category can be ``generated'' from $X$ under coproducts and exact triangles \cite[Section 4.2]{Kel94}. 

In \cite{SchShi03} Schwede and Shipley construct an \emph{endomorphism ring spectrum} $\End(X)$ of a compact generator $X$ and show the following. 

\begin{theorem}[Schwede-Shipley]\label{thm:morita}
Let $\C$ be a stable model category with a compact generator $X$. Then $\C$ is Quillen equivalent to the model category of modules over the ring spectrum $\End(X)$. 
\end{theorem}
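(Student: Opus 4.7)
The plan is to realise the Quillen equivalence via an explicit adjunction whose construction and verification both proceed in stages. First I would construct $\End(X)$ as a genuine symmetric ring spectrum, not merely as a homotopy-theoretic object. Following Schwede--Shipley, one exploits the canonical framing of any pointed model category (or, equivalently, Dugger's universal spectral enrichment) to endow $\C$ with a spectral enrichment. With this in hand, $\End(X)$ is defined level-wise by mapping objects from a cofibrant replacement of $X$ into fibrant resolutions of $X \wedge S^n$, and composition of such maps provides the ring spectrum structure.

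Next I would construct the candidate adjunction
\[
- \wedge_{\End(X)} X \, : \, \End(X)\text{-mod} \,\rightleftarrows\, \C \, : \, \Hom_\C(X, -),
\]
where the right adjoint sends $Y$ to the spectrum whose $n$-th level is the derived mapping object from $X$ into $Y \wedge S^n$, carrying an $\End(X)$-action by precomposition, and the left adjoint is the usual smash-over-a-ring construction, built as a coequaliser. That this is a Quillen adjunction with respect to the projective model structure on $\End(X)\text{-mod}$ is formal: (acyclic) fibrations on both sides are detected by mapping out of (shifts of) $X$.

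To upgrade to a Quillen equivalence I would argue in two steps. First, check that the derived unit and counit are weak equivalences at the generators $\End(X)$ and $X$; this reduces to the essentially tautological identification $\mathbb{R}\!\Hom_\C(X, X) \simeq \End(X)$ arising from the construction. Second, extend to all objects. The full subcategory of $\Ho(\C)$ on which the counit is an equivalence is closed under distinguished triangles by stability, and closed under arbitrary coproducts because compactness of $X$ guarantees that $\mathbb{R}\!\Hom_\C(X, -)$ commutes with coproducts. Since $X$ is a compact generator, this subcategory exhausts $\Ho(\C)$; a parallel argument, using that $\End(X)$ is a compact generator of $\Ho(\End(X)\text{-mod})$, handles the other direction.

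The main obstacle is the very first step: producing $\End(X)$ as a strict ring spectrum with a homotopically meaningful action on $X$, in the absence of any a priori spectral enrichment on $\C$. This is where the substantial foundational machinery enters, whether through cosimplicial/simplicial framings or through a zigzag of Quillen equivalences to a spectral model category. Once the enrichment is in place, the remaining verifications (Quillen adjunction, identification at $X$, bootstrapping to all of $\Ho(\C)$ via compactness and triangles) are formal consequences of the defining properties of a stable model category with a compact generator.
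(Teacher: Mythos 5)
The paper does not prove this statement at all --- it is imported verbatim from Schwede--Shipley's ``Stable model categories are categories of modules'' and used as a black box, so there is no internal proof to compare against. Your outline (spectral enrichment via framings, the adjunction $-\wedge_{\End(X)}X \dashv \Hom_{\C}(X,-)$, verification of the derived unit and counit on the generators, and the bootstrap over the localizing subcategory using compactness, coproducts and exact triangles) is a faithful summary of the argument in that cited reference, and the technical hypotheses it needs (properness, cellularity) are exactly the standing assumptions the paper imposes on its model categories.
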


This shows that all homotopy information of a monogenic stable model category is encoded in a symmetric ring spectrum. A version of the above theorem exists for model categories with several compact generators, but we only need to be interested in the monogenic case for this paper.

\subsection{Algebraic model categories and Eilenberg-MacLane spectra}\label{subsec:algebraic}

We would now like to focus on algebraic model categories. 

\begin{definition}
A model category is called \emph{algebraic} if it is enriched, tensored and cotensored over the model category of chain complexes of abelian groups $Ch(\mathbb{Z})$. Further, this has to be a Quillen adjunction in two variables in the sense of \cite[Definition 4.2.1]{Hov99}.
\end{definition}

For details see also \cite[Appendix A]{Dug06}. Applying the Morita theorem \ref{thm:morita} to an algebraic model category $\C$ with a single compact generator $X$ yields something special:

\begin{theorem}[Dugger-Shipley]\label{thm:algebraicmorita}
Let $\C$ be an algebraic model category with a single cofibrant and fibrant compact generator $X$. Then the endomorphism ring spectrum of $X$ in the sense of Theorem \ref{thm:morita} is the Eilenberg-MacLane spectrum $EML(\C(X,X))$ of the endomorphism dga $\C(X,X)$. 
\end{theorem}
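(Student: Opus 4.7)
The plan is to combine Theorem~\ref{thm:morita} with Shipley's zig-zag of Quillen equivalences between differential graded algebras over $\mathbb{Z}$ and $H\mathbb{Z}$-algebra ring spectra, whose total derived functor from dgas is (up to natural stable equivalence) the Eilenberg-MacLane functor $EML$. Since $\End(X)$ is by construction a symmetric ring spectrum modelling all derived endomorphisms of $X$, it suffices to (i) lift $\End(X)$ naturally to an $H\mathbb{Z}$-algebra and (ii) identify its image under Shipley's zig-zag with the endomorphism dga $\C(X,X)$.

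First I would observe that an algebraic model category is automatically a simplicial (indeed $s\mathrm{Ab}$-) model category via the chain of strong monoidal Quillen adjunctions
\[
\sset \rightleftarrows s\mathrm{Ab} \rightleftarrows Ch_{\geq 0}(\mathbb{Z}) \hookrightarrow Ch(\mathbb{Z}).
\]
Running the Schwede-Shipley framing construction for $X$ through these richer enrichments produces compatible endomorphism symmetric ring spectra $\End^{Ch}(X) \in \Sp^{\Sigma}(Ch(\mathbb{Z}))$, $\End^{s\mathrm{Ab}}(X) \in \Sp^{\Sigma}(s\mathrm{Ab})$, and $\End(X) \in \Sp^{\Sigma}(\sset)$, linked by the canonical forgetful functors. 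Via the Dold-Kan correspondence, the $\Sp^{\Sigma}(Ch(\mathbb{Z}))$-algebra $\End^{Ch}(X)$ encodes precisely the dga $\C(X,X)$: its level-zero piece is the endomorphism complex with its composition product, and the higher symmetric spectrum levels add no new homotopical information. Shipley's zig-zag then sends $\End^{Ch}(X)$ on one side to the dga $\C(X,X)$, and on the other side, via $\Sp^{\Sigma}(s\mathrm{Ab})$ and $H\mathbb{Z}$-modules, to $EML(\C(X,X))$, which must therefore be stably equivalent to $\End(X)$ as an $H\mathbb{Z}$-algebra.

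The main obstacle will be the compatibility step: verifying that the Schwede-Shipley construction, originally defined via arbitrary cosimplicial/simplicial framings of $X$, agrees up to stable equivalence with the construction obtained from the intrinsic $Ch(\mathbb{Z})$-tensoring. This requires checking that each functor in the zig-zag above is appropriately (lax) monoidal and sends the framing objects produced by the $Ch(\mathbb{Z})$-enrichment to weak equivalents of the usual simplicial framings, so that the induced ring spectrum structures match, not merely the underlying spectra. Once this compatibility is established, together with the monoidal character of Shipley's equivalence, the identification $\End(X) \simeq EML(\C(X,X))$ as symmetric ring spectra is automatic.
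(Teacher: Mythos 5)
The paper offers no proof of its own here: it simply cites \cite[Proposition 6.3]{DugShi07}, and your sketch is essentially a faithful reconstruction of the strategy behind that citation --- restrict the $Ch(\mathbb{Z})$-enrichment along the chain of monoidal functors down to $\sset$, check that the resulting endomorphism ring spectrum agrees with the framing-based one of Schwede--Shipley (this compatibility is the content of \cite{Dug06} and \cite{DugShi07b}, and you correctly flag it as the main obstacle), and identify the $Ch(\mathbb{Z})$-side with $EML(\C(X,X))$ via Shipley's zig-zag \cite{Shi07}. One small correction: the Dold--Kan adjunction between $s\mathrm{Ab}$ and $Ch_{\geq 0}(\mathbb{Z})$ is not a strong monoidal Quillen adjunction but only a weak monoidal Quillen equivalence (normalization is lax symmetric monoidal via the shuffle map, with the Alexander--Whitney map going the other way); lax monoidality suffices to restrict enrichments and the weak monoidal case is exactly what Shipley's zig-zag is built to handle, so your argument survives, but the phrase ``strong monoidal'' should be dropped for that link in the chain.
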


This is \cite[Proposition 6.3]{DugShi07}.
Details on the definition of Eilenberg-MacLane spectra of dgas can be found in \cite[Section 1.2]{DugShi07b}. To give a very brief summary, Shipley showed in \cite{Shi07} that the model category of chain complexes is Quillen equivalent to the model category of symmetric spectra in simplicial abelian groups. The monoidal version of this states that dgas are Quillen equivalent to symmetric ring spectra in simplicial abelian groups. The Eilenberg-MacLane functor from symmetric ring spectra in simplicial abelian groups to symmetric ring spectra (in simplicial sets) is simply induced by the forgetful functor from abelian groups to sets. 

The underlying Eilenberg-MacLane spectrum (ignoring the ring structure) of a dga $C$ is entirely determined by the homology $H_*(C)$. However, the multiplication on $EML(C)$ depends on more than the homology alone. Eilenberg-MacLane spectra of dgas satisfy the following property: if $C$ and $D$ are quasi-isomorphic dgas, then their Eilenberg-MacLane spectra are weakly equivalent as ring spectra. This statement is not an equivalence as Dugger and Shipley show that there are dgas that give rise to weakly equivalent EML-ring spectra without being quasi-isomorphic \cite[Section 5.1]{DugShi07}.

\section{Rigidity and algebraic models}\label{sec:rigidity}

We would like to see whether certain Bousfield localisations of the stable homotopy category $\Ho(\Sp)$ possess algebraic models and if so, how many. Specifically, our goal is to show that there is at most one algebraic model realising the stable model category $(\LS)\mbox{-mod}$ of modules over the $K(1)$-local sphere spectrum. Throughout this paper we only consider localisations at an odd prime $p$.

\subsection{Massey products and Toda brackets}\label{subsec:toda}

Assume that there is an equivalence of triangulated categories
\[
\Phi: \Ho((\LS)\mbox{-mod}) \longrightarrow \Ho(\C)
\]
where $\C$ is an algebraic model category. The left hand side has one compact generator, namely $\LS$ itself. Thus, a (cofibrant and fibrant replacement of) $X:= \Phi(\LS)$ is a compact generator of $\C$. By the Morita theory results outlined in Section \ref{subsec:algebraic}, $\C$ is Quillen equivalent to modules over the Eilenberg-MacLane ring spectrum of the dga $\C(X,X)$. In order to study this Eilenberg-MacLane spectrum we first make some observations about $\C(X,X)$ itself.

For that, we briefly recall the definition of Massey products which can also be found in e.g. \cite[Chapter A1.4]{Rav86} and \cite[Remark 5.10]{BenKraSch05}. (Note that in the definition given in those references, dgas are graded cohomologically whereas in this paper we grade dgas homologically.) Let $C$ be a dga, and let $\alpha$, $\beta$ and $\gamma$ be elements in the homology $H_*(C)$ such that $\alpha \cdot \beta =0$ and $\beta \cdot \gamma =0$. Say that $\alpha$, $\beta$ and $\gamma$ are represented by cycles $a$, $b$ and $c$ respectively. Because of our assumption there is at least one element $u \in C$ such that $$d(u)=(-1)^{1+|a|}a \cdot b$$ and at least one element $v$ such that $$d(v)=(-1)^{1+|b|}b\cdot c.$$

The \emph{triple Massey product} of $\alpha$, $\beta$ and $\gamma$ is the coset
\begin{multline}
\left< \alpha, \beta, \gamma \right> = \{ [(-1)^{1+|u|}u \cdot c + (-1)^{1+|a|}a \cdot v] \,\,\,|\,\,\, d(u)=(-1)^{1+|a|}a \cdot b, d(v)=(-1)^{1+|b|}b\cdot c \} \nonumber \\ \subseteq H_{|a|+|b|+|c|+1}(C) \nonumber
\end{multline}

Because those elements $u$ and $v$ might not be unique, the Massey product can consist of more than one element. The choices are encoded in the group
\[
\alpha H_{|b|+|c|+1}(C) \oplus \gamma H_{|a|+|b|+1}(C)
\]
which is called the \emph{indeterminacy} of $\left<\alpha, \beta,\gamma\right>.$ If the indeterminacy is trivial, the Massey product consists of one element only which will be the case for our examples in this paper.

\begin{lemma}\label{lem:homology}
The homology of the endomorphism dga $\C(X,X)$ is isomorphic, as a graded algebra, to the homotopy groups $\pi_*(\LS)$. Moreover, under this isomorphism the Massey products of $\C(X,X)$ agree with the Toda brackets of $\pi_*(\LS)$. 
\end{lemma}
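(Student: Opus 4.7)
The plan is to split the statement into two assertions and handle them in turn: first the graded algebra isomorphism on homology, and then the agreement of Massey products with Toda brackets. Both will be obtained by transporting data across the triangulated equivalence $\Phi$ and then invoking standard Morita-type identifications for algebraic model categories.

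For the algebra statement, I would begin with the fact that $X$ is cofibrant and fibrant, so that by the enriched Quillen two-variable structure of $\C$ over $Ch(\mathbb{Z})$ the homology of the endomorphism dga computes the graded self-maps of $X$,
\[
H_*(\C(X,X)) \cong [X,X]_*^{\C},
\]
as graded rings, with the product on the right-hand side induced by composition. The equivalence $\Phi$ is triangulated and thus preserves the graded composition structure, so it induces a graded ring isomorphism $[X,X]_*^{\C} \cong [\LS, \LS]_*^{(\LS)\mbox{-mod}}$. Since $\LS$ is the unit of $(\LS)\mbox{-mod}$, the latter is precisely $\pi_*(\LS)$ as a graded ring, and composing these identifications gives the desired graded algebra isomorphism.

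For the Massey product claim I would appeal to the classical fact that, for any dga $C$, the Massey products in $H_*(C)$ can equivalently be described as Toda brackets at the free module $C$ inside the triangulated homotopy category of $C$-modules, where the latter brackets are defined purely from the distinguished triangles. Applied to $\C(X,X)$ this identifies Massey products in $H_*(\C(X,X))$ with Toda brackets in $\Ho(\C)$ at $X$. Since $\Phi$ is triangulated it carries Toda brackets bijectively, so the Toda brackets at $X$ in $\Ho(\C)$ correspond under $\Phi^{-1}$ to Toda brackets at $\LS$ in $\Ho((\LS)\mbox{-mod})$, which are by definition the ordinary Toda brackets in $\pi_*(\LS)$.

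The main technical ingredient, and the step that needs actual care, is the identification of Massey products in $H_*(C)$ with triangulated Toda brackets at $C$ in the homotopy category of $C$-modules. This is classical, but with the homological grading chosen here one must carefully check that the signs $(-1)^{1+|a|}$ and $(-1)^{1+|b|}$ in the cobounding elements from Section~\ref{subsec:toda} are precisely the ones produced by the shifts in the relevant distinguished triangles. Once this sign matching is carried out the lemma follows formally from $\Phi$ being a triangulated equivalence together with the algebra isomorphism above.
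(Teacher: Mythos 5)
Your proposal is correct and follows essentially the same route as the paper: the enriched (two-variable Quillen) structure identifies $H_*(\C(X,X))$ with $[X,X]^\C_*$, the equivalence $\Phi$ transports this to $\pi_*(\LS)$, and the Massey product/Toda bracket comparison is the classical fact the paper cites from Adams. Your version merely spells out more explicitly the intermediate identification through the derived category of $\C(X,X)$-modules and the sign bookkeeping, which the paper leaves to the reference.
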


\begin{proof}
The isomorphism in question is given by
\[
H_*(\C(X,X)) \cong [X,X]^\C_* \cong [\LS, \LS]^{\LS\mbox{\scriptsize{-mod}}}_* = \pi_*(\LS).
\]
The first isomorphism is given by the following adjunction, see e.g \cite[Appendix A]{Dug06},
\[
[X,X]^\C_* \cong [X \otimes^L \mathbb{Z}[0] ,X]^\C_* \cong \Ho(Ch(\mathbb{Z}))(\mathbb{Z}[0], \C(X,X)) = H_*(\C(X,X))
\]
where $\otimes^L$ is part of the $Ch(\mathbb{Z})$-model structure.
The second isomorphism is given by the equivalence $\Phi.$ The relation between Massey products and Toda brackets is shown in e.g. \cite[Sections 3-5]{Ada66}.

\end{proof}

Let us make this homotopy and Toda bracket structure concrete: The homotopy groups of the $K(1)$-local sphere at an odd prime $p$ are given by the following.

\[
\pi_i L_{K(1)}\mathbb{S} =  \left\{ \begin{array}{r@{\quad:\quad}l} \Zp & i=0,-1 \\ \mathbb{Z}/ p^{\nu(m)+1} & i=(2p-2)m-1, m \neq 0 \\ 0 & \mbox{else.}   \end{array} \right.
\]

Here, $\mathbb{Z}_p$ denotes the $p$-adic integers and $\nu(m)$ denotes how often $p$ occurs in the prime factor decomposition of the integer $m$. 
Let $\alpha_i$ be an order $p$ element of $\pi_{(2p-2)i-1}(L_{K(1)}\mathbb{S})$, $i \neq 0$. Then the only nontrivial Toda brackets of length 3 are
\[
\alpha_{i+j}=\left< \alpha_i,p,\alpha_j \right>.
\]
Their indeterminacy is zero. (For more detailed information, see e.g. \cite{Rav86}.)

The algebra structure and Toda bracket structure on $\pi_*(\LS)$ and hence the corresponding algebra and Massey product structure on $H_*(\C(X,X))$ are both very strong. For degree reasons we see immediately that there is no nontrivial multiplication on $\pi_*(\LS)$. The given homotopy groups are all cyclic of $p$-power order (except in degrees $0$ and $-1$). While the additive generators of the respective homotopy groups are not always captured by Toda brackets, their order-$p$ elements of are entwined in very rigid Toda bracket relations.

\bigskip
Now assume that there are two algebraic model categories, $\C$ and $\D$, whose homotopy categories are triangulated equivalent to $\Ho((\LS)\mbox{-mod})$. Let us denote the compact generators of $\C$ and $\D$ given by the image of $\LS$ under those triangulated equivalences by $X$ and $Y$ respectively. By Theorem \ref{thm:algebraicmorita}, $\C$ is Quillen equivalent to the category of modules over the Eilenberg-MacLane spectrum of the endomorphism dga $\C(X,X)$. Analogously, $\D$ is Quillen equivalent to modules over the Eilenberg-MacLane spectrum of the endomorphism dga $\D(Y,Y)$.

By Lemma \ref{lem:homology}, those two endomorphism dgas have the same homology algebra and Massey product structure (given by the homotopy groups and Toda brackets of $\LS$). As described earlier, this algebra and Massey product structure is very strong- in fact, strong enough to show that  if $\C(X,X)$ and $\D(Y,Y)$ are commutative dgas, then they are quasi-isomorphic. (This is going to be Theorem \ref{th:quasiiso}.) Thus, with the additional assumption of commutativity, their Eilenberg-MacLane ring spectra are weakly equivalent ring spectra and their respective categories of module spectra are Quillen equivalent \cite[Theorem 4.3]{SchShi00}, which is the main result of this paper.

We would also like to point out that all dgas in this paper are dgas over $\Zp$, the $p$-adic integers. 

\subsection{A note on commutativity}\label{sec:commutative}

Unfortunately, at this stage we can only show the main computation for commutative dgas rather than general associative dgas. While the homology of the relevant dgas is commutative, there is no guarantee that every dga realising it is actually commutative itself or even quasi-isomorphic to a commutative dga. 

The endomorphism dga of an object in an algebraic model category will generally not be commutative. However, if the algebraic model category $\C$ in question was a \emph{monoidal} model category with $X$ being the unit, then $\C(X,X)$ would indeed be a commutative dga. For our set-up this can be guaranteed if we use the following additional assumption.

\begin{definition}
Let $$\Phi: \mathcal{M} \longrightarrow \mathcal{N}$$ be a functor of monoidal categories. We say that $\Phi$ is \emph{pre-monoidal} if there is a natural isomorphism 
\[
\Phi( A \otimes B) \cong \Phi(A) \otimes \Phi(B)
\]
for all $A, B \in \mathcal{M}$.
\end{definition}

This is much weaker than assuming $\Phi$ to be a strong monoidal functor as we do not ask for any associativity relations. For example, the important case of Franke's algebraic model and its monoidal behaviour is described in \cite{Gan07} and \cite{BarRoi11a}. A pre-monoidal functor must send the unit of $\mathcal{M}$ to the unit of $\mathcal{N}$. Thus, if we ask our algebraic models to be related to $\Ho((\LS)\mbox{-mod})$ by a pre-monoidal triangulated equivalence rather than just a triangulated equivalence, $\Phi$ must send the unit to the unit. Thus, we are in the situation of our endomorphism dgas being commutative as it is the endomorphism dga of the unit. The key to this is Theorem \ref{th:quasiiso} which we will prove in Section \ref{sec:main}. It states the following.

\begin{theorem*}
Let $D$ be a commutative dga whose homology and Massey products agree with the homotopy groups and Toda brackets of $\pi_*(\LS).$
Then there is a quasi-isomorphism of dgas $$\varphi: C \longrightarrow D$$ where $C$ is given by
\begin{eqnarray}
C = \Zp[x,x^{-1}] \otimes \Lambda_{\Zp}(e) \nonumber \\ \,\,\,\,\, |x|=2p-2, |e|=2p-3,\,\,\,\,\, d(x)=pe. \nonumber
\end{eqnarray}
\end{theorem*}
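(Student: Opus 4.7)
The plan is to build a quasi-isomorphism $\varphi: C \to D$ by choosing representatives of the distinguished homology classes in $D$ and assembling them into a dga map. Since $C$ is obtained as the localisation of the free graded-commutative $\Zp$-algebra $\Zp[x] \otimes \Lambda_{\Zp}(e)$ at the element $x$, specifying $\varphi$ amounts to choosing elements $\hat e \in D_{2p-3}$ and $\hat x \in D_{2p-2}$ satisfying $d(\hat e) = 0$, $d(\hat x) = p\hat e$, and such that $\hat x$ is a unit of $D$.

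First I would select a cycle $\hat e$ representing the order-$p$ generator $\alpha_1 \in H_{2p-3}(D) \cong \mathbb{Z}/p$; this is possible by the assumption on $H_*(D)$. Because $p[\hat e] = 0$, there exists $\hat x \in D_{2p-2}$ with $d(\hat x) = p\hat e$. The candidate dga map is then defined by $\varphi(e) = \hat e$ and $\varphi(x) = \hat x$, with the construction completed once $\hat x$ is shown to be invertible.

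The main obstacle is arranging that $\hat x$ be a genuine unit in $D$. Since $H_{(2p-2)m}(D) = 0$ for all $m \neq 0$, there is no homological obstruction, but exhibiting a multiplicative inverse at the chain level is subtle and requires the Postnikov-tower machinery of Section \ref{sec:postnikov}. Concretely, I would replace $D$ by a cofibrant quasi-isomorphic commutative dga and proceed up the Postnikov tower, inverting $\hat x$ stage by stage; the obstructions to lifting a partial inverse lie in cohomology groups that vanish thanks to the sparseness of $H_*(D)$, and the Toda bracket identities $\alpha_{i+j} = \langle \alpha_i, p, \alpha_j\rangle$ are precisely what guarantees that the chain-level products $\hat x^{m-1} \hat e$ behave consistently across all $m$, so that no higher operation can obstruct the existence of $\hat x^{-1}$. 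This is the technical heart of the argument.

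Once $\varphi: C \to D$ is a bona fide dga map, verifying that it is a quasi-isomorphism is routine. Both sides have homology concentrated in degrees $0$, $-1$, and $(2p-2)m - 1$ for $m \neq 0$; the map $\varphi_*$ is the identity on $H_0 = \Zp$, and in degree $(2p-2)m - 1$ the Leibniz calculation $d(\hat x^m) = mp\, \hat x^{m-1} \hat e$ together with $[\hat e] = \alpha_1$ shows that $\varphi_*$ sends the cyclic generator $[x^{m-1} e]$ to a generator of $H_{(2p-2)m - 1}(D) \cong \mathbb{Z}/p^{\nu(m)+1}$, using invertibility of $\hat x$ for $m \leq 0$ and for the degree $-1$ case. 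Hence $\varphi_*$ is an isomorphism in every degree and $\varphi$ is the desired quasi-isomorphism.
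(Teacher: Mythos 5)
Your overall framing is correct---$C$ is free graded-commutative on an exterior generator $e$ and an invertible polynomial generator $x$, so a dga map out of $C$ is determined by a cycle $\hat e$ and an element $\hat x$ with $d(\hat x)=p\hat e$ and $\hat x$ a unit---but both steps you defer are exactly where the content of the theorem lies, and neither sketch would go through as written. For the invertibility of $\hat x$: the Postnikov machinery of Section~\ref{sec:postnikov} is not an obstruction theory for inverting elements; its sole role (Proposition~\ref{prop:degreezero}) is to replace $D$ by a quasi-isomorphic dga with $D_0=\Zp$, so that the product of $\hat x=b$ with a chosen element $\bar b\in D_{-2p+2}$ (satisfying $d(\bar b)=-p\bar a$, where $\bar a$ represents $\alpha_{-1}$) is a scalar $\theta\in\Zp$. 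Whether $\theta$ is a unit is then decided not by vanishing of obstruction groups but by an explicit Massey product computation: $d(b\bar b)=0$ together with the torsion-freeness of the cycles in $D_{-1}$ gives $a\bar b=\bar a b$, and substituting into
\[
\langle\alpha_2,p,\alpha_{-1}\rangle=[-2ab\bar b+\bar a b^2]=\theta[-a]
\]
and comparing with the assumed relation $\langle\alpha_2,p,\alpha_{-1}\rangle=\alpha_1=[-a]$ forces $\theta$ to be a unit. Your appeal to ``no homological obstruction'' misses that invertibility is a chain-level statement about $D_0$, which is precisely why the reduction $D_0=\Zp$ is needed before anything else.

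More seriously, your final paragraph claims that once $\hat x$ is invertible the quasi-isomorphism property is routine, following from $d(\hat x^m)=mp\,\hat x^{m-1}\hat e$ and $[\hat e]=\alpha_1$. This does not follow. The Leibniz computation only shows $mp\,[\hat x^{m-1}\hat e]=0$, i.e.\ that the order of $[\hat x^{m-1}\hat e]$ divides $p^{\nu(m)+1}$ up to units; since the entire group $H_{(2p-2)m-1}(D)$ already has that order, this gives no information and in particular does not exclude $[\hat x^{m-1}\hat e]$ being zero or a proper multiple of a generator. (Note that multiplication by $\hat x$ is not a chain map, since $d(\hat x)\neq 0$, so invertibility of $\hat x$ does not let you transport generators between homology groups.) The paper's proof establishes that $[ab^{i-1}]$ generates $H_{(2p-2)i-1}(D)$ by induction on $i$, consuming the hypothesis $\alpha_i=\langle\alpha_1,p,\alpha_{i-1}\rangle$ via the explicit representative of the Massey product, which yields $\alpha_i=[-(i-1)a_{i-1}b-ab_{i-1}]=[-iab^{i-1}]$, with a parallel induction for negative $i$. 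This is where the Toda bracket hypothesis actually does its work; your proposal invokes it only as a vague consistency statement inside the invertibility step, rather than as the engine of the surjectivity of $\varphi_*$, so the argument as proposed has a genuine gap.
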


\medskip
This will then prove the result of this paper:

\begin{theorem}\label{thm:premonoid}
Let $$\Phi: \Ho((\LS)\mbox{-mod}) \longrightarrow \Ho(\C)$$ be a pre-monoidal equivalence with $\C$ an algebraic model category. Then $\C$ is unique: any other algebraic model category $\D$ with a pre-monoidal triangulated equivalence $$\Psi: \Ho((\LS)\mbox{-mod}) \longrightarrow \Ho(\D)$$ is Quillen equivalent to $\C$. 
\end{theorem}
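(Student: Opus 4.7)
The plan is to combine the Morita-theoretic framework from Section \ref{sec:background} with the hard commutative-uniqueness statement (Theorem \ref{th:quasiiso}) that will be proved in Section \ref{sec:main}. Given two algebraic model categories $\C$ and $\D$ with pre-monoidal triangulated equivalences $\Phi$ and $\Psi$ from $\Ho((\LS)\mbox{-mod})$, first I would replace $X := \Phi(\LS)$ and $Y := \Psi(\LS)$ by cofibrant-fibrant models. Since $\LS$ is a compact generator of $\Ho((\LS)\mbox{-mod})$, the objects $X$ and $Y$ are compact generators of $\C$ and $\D$ respectively. By Theorem \ref{thm:algebraicmorita}, $\C$ is Quillen equivalent to modules over $EML(\C(X,X))$, and likewise $\D$ is Quillen equivalent to modules over $EML(\D(Y,Y))$.

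Next I would exploit pre-monoidality. Because a pre-monoidal functor preserves the unit, and because $\LS$ is the unit of $(\LS)\mbox{-mod}$, both $X$ and $Y$ are (up to cofibrant-fibrant replacement) units of the monoidal structures on $\C$ and $\D$. The endomorphism dga of the monoidal unit in an algebraic monoidal model category is commutative, so both $\C(X,X)$ and $\D(Y,Y)$ are commutative dgas. By Lemma \ref{lem:homology} applied to each equivalence, their homology algebras and Massey products both agree with $\pi_*(\LS)$ and the corresponding Toda brackets.

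At this point I would invoke Theorem \ref{th:quasiiso}: there is a (single) commutative dga $C = \Zp[x,x^{-1}] \otimes \Lambda_{\Zp}(e)$ with $d(x) = pe$ which admits quasi-isomorphisms $C \to \C(X,X)$ and $C \to \D(Y,Y)$. Composing these through the zig-zag gives a quasi-isomorphism between $\C(X,X)$ and $\D(Y,Y)$. By the Eilenberg-MacLane functoriality discussed in Section \ref{subsec:algebraic}, a quasi-isomorphism of dgas induces a weak equivalence of the associated EML ring spectra. Finally, by \cite[Theorem 4.3]{SchShi00}, weakly equivalent ring spectra have Quillen equivalent module categories, so
\[
\C \,\,\simeq_Q\,\, EML(\C(X,X))\mbox{-mod} \,\,\simeq_Q\,\, EML(\D(Y,Y))\mbox{-mod} \,\,\simeq_Q\,\, \D,
\]
as required.

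The main obstacle is clearly Theorem \ref{th:quasiiso}, which is precisely the technical heart of the paper and is deferred to Section \ref{sec:main}; once that algebraic rigidity statement is in hand, the assembly above is essentially formal. The two conceptual points that need to be handled carefully are (i) verifying that the pre-monoidal assumption genuinely forces $X$ and $Y$ to be units (so that the endomorphism dgas are commutative and Theorem \ref{th:quasiiso} applies), and (ii) checking that the Massey-product identification in Lemma \ref{lem:homology} uses only the triangulated structure transported by $\Phi$ and $\Psi$, so that both dgas really do receive the same algebraic input.
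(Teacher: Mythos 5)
Your proposal follows the paper's argument essentially verbatim: Morita reduction to the endomorphism dgas via Theorem \ref{thm:algebraicmorita}, pre-monoidality forcing the generators to be units and hence the dgas to be commutative, Lemma \ref{lem:homology} supplying the common homology and Massey product data, Theorem \ref{th:quasiiso} giving the zig-zag of quasi-isomorphisms through the test dga $C$, and finally \cite[Theorem 4.3]{SchShi00} to pass to Quillen equivalent module categories. The two caveats you flag at the end are exactly the points the paper itself addresses in Sections \ref{subsec:toda} and \ref{sec:commutative}, so there is nothing to add.
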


\section{A Postnikov section argument}\label{sec:postnikov}

So we have established that the final goal of this paper is to show that there is only one commutative dga $D$ whose homology and Massey products agree with the homotopy groups and Toda brackets of the $K(1)$-local sphere $\LS$. Before we get to the main computation, we dedicate this section to showing that we can assume $D$ to satisfy
\[
D_0=H_0(D)=\Zp.
\]
This uses Postnikov approximation as outlined in \cite[Section 5]{Shi02}. Postnikov approximations are certain dgas fitting into factorisations produced by the small object argument \cite[Theorem 2.1.14]{Hov99}. For certain sets $S$ and a morphism in a category with enough colimits, the small object argument produces a factorisation of said morphism where the first map is produced out of pushouts and transfinite compositions of $S$, and the second map in the factorisation has the right lifting property with respect to $S$.

Let us turn to our case.
A \emph{Postnikov approximation} of $D$ is a dga $P_0D$ given by a factorisation
\[
D \xrightarrow{i} P_0 D \longrightarrow 0
\]
produced by the small object argument with respect to the set
\[
I = \{ F(S^n) \longrightarrow F(D^{n+1}) \,\,\,|\,\,\, n > 1\}.
\]
Here, $F$ denotes the free commutative dga functor (left adjoint to the forgetful functor from commutative dgas to chain complexes). The chain complex $S^n$ is $\Zp$ in degree $n$ and zero otherwise, and $D^{n+1}$ is the acyclic chain complex over $\Zp$ given by an element $x$ in degree $n$, an element $y$ in degree $n+1$ and the differential $d(y)=x$.

The Postnikov approximation satisfies the following.
\begin{itemize}
\item The map $i$ is an isomorphism in degrees $\leq 1$ and thus induces a homology isomorphism in nonpositive degrees.
\item The map $P_0D \longrightarrow 0$ is a homology isomorphism in positive degrees.
\end{itemize}
The first point follows from the fact that the original dga $D$ has only been ``modified'' in degrees bigger than $1$. The second point follows from $P_0D \longrightarrow 0$ having the right lifting property with respect $I$- by adjunction, this means that $P_0D \longrightarrow 0$ (viewed as a map of chain complexes) has the right lifting property with respect to $S^n \longrightarrow D^{n+1}$ for $n \ge 1$, which implies that it is a homology isomorphism in positive degrees.

Using the small object argument again with respect to the set
\[
J = \{ F(0) \longrightarrow F(D^{n+1}) \,\,\,|\,\,\, n > 1\}
\]
one obtains a factorisation of the map $i: D \longrightarrow P_0D$ as
\[
D \xrightarrow{\bar{i}} \bar{D} \xrightarrow{\bar{p}} P_0 D.
\]
This satisfies
\begin{itemize}
\item The map $\bar{i}$ is a quasi-isomorphism and also an isomorphism in nonpositive degrees.
\item The map $\bar{p}$ is an epimorphism and also an isomorphism in nonpositive degrees.
\end{itemize}

Again, the first point follows from the fact that $D$ has only been ``modified'' in positive degrees, recalling that $\bar{p}\circ\bar{i}=i$ is an isomorphism in nonpositive degrees. The second point is an adjunction argument analogous to the previous case.

We now truncate $P_0D$ to obtain a dga $Q$ given by
\[
Q_i = \left\{ \begin{array}{r@{\quad:\quad}l} 
0 & i >0 \\
H_0(D) & i=0 \\
D_i & i<0.
\end{array} \right.
\]
We can also write down a quasi-isomorphism
\[
h: Q \longrightarrow P_0D.
\]
This map is zero in positive degrees and the identity in negative degrees. In degree zero it is the composition
\[
H_0(D) \longrightarrow ker(d: (P_0 D)_0\rightarrow (P_0D)_{-1}) \subset (P_0D)_0.
\]
The cycle-choosing homomorphism in this composition exists since $H_0(D)=\mathbb{Z}_p$ is projective over $\mathbb{Z}_p$ itself.

\begin{proposition}\label{prop:degreezero}
There is a dga $D'$ which is quasi-isomorphic to $D$ and satisfies
\[
D'\cong H_0(D)=\Zp.
\]
\end{proposition}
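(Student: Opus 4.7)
The plan is to build $D'$ as a pullback of commutative dgas, combining the three objects $Q$, $\bar D$ and $P_0D$ already constructed in this section. Concretely, set
\[
D' := Q \times_{P_0D} \bar D,
\]
formed from the quasi-isomorphism $h: Q \to P_0D$ and the degreewise epimorphism $\bar p: \bar D \to P_0D$. Since the forgetful functor from commutative dgas over $\Zp$ to chain complexes creates limits, $D'$ is computed degreewise as $D'_n = \{(q,d) \in Q_n \times \bar D_n : h(q) = \bar p(d)\}$, equipped with the componentwise differential and product and the unit $(1,1)$ (well-defined because $h$ and $\bar p$ are unital). The projections $\pi_Q: D' \to Q$ and $\pi_{\bar D}: D' \to \bar D$ are maps of commutative dgas by construction.

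The first step is to show that $\pi_{\bar D}$ is a quasi-isomorphism. Because $\bar p$ is a degreewise epimorphism, the kernel of $\pi_Q$ is isomorphic to $\ker \bar p$, giving a short exact sequence of chain complexes
\[
0 \longrightarrow \ker \bar p \longrightarrow D' \xrightarrow{\pi_Q} Q \longrightarrow 0
\]
that maps via $(\mathrm{id}, \pi_{\bar D}, h)$ to the short exact sequence $0 \to \ker \bar p \to \bar D \xrightarrow{\bar p} P_0D \to 0$. Comparing the resulting long exact homology sequences and invoking the five lemma, together with the fact that $h$ is a quasi-isomorphism, shows that $\pi_{\bar D}$ is a quasi-isomorphism. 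Composing with $\bar i: D \to \bar D$ gives the zigzag of quasi-isomorphisms $D \xrightarrow{\bar i} \bar D \xleftarrow{\pi_{\bar D}} D'$, so $D'$ and $D$ represent the same object in the homotopy category of commutative dgas.

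Next I would read off the low-degree behaviour. Since $\bar p$ is an isomorphism in nonpositive degrees, the pullback relation $h(q) = \bar p(d)$ determines $d \in \bar D_n$ uniquely from $q \in Q_n$ for every $n \leq 0$. Hence $D'_n \cong Q_n$ for $n \leq 0$, and in particular $D'_0 \cong Q_0 = H_0(D) = \Zp$ while $D'_n \cong D_n$ for $n < 0$. In strictly positive degrees $D'_n \cong (\ker \bar p)_n$, which is exactly the chain data that carries the positive homology of $D$ and must be retained for $D'$ to remain quasi-isomorphic to $D$.

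The only obstacle worth flagging is verifying that the pullback construction cooperates properly with the commutative dga structure: that multiplication and unit pass to the pullback (automatic from the limit-creating forgetful functor), that pullback along a degreewise epimorphism preserves quasi-isomorphisms (the five-lemma argument above), and that $\bar p$ really is degreewise surjective (already recorded in the construction of $\bar D$). All three are standard, so no genuine difficulty arises beyond assembling the pieces produced earlier in the section.
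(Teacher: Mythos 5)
Your proposal is correct and follows essentially the same route as the paper: the same pullback $Q \times_{P_0D} \bar{D}$ along $h$ and $\bar{p}$, with the same identification of $D'_0 \cong Q_0 = \Zp$ from $\bar{p}$ being an isomorphism in nonpositive degrees. The only difference is that where the paper invokes right properness of the projective model structure on chain complexes (pullback of a weak equivalence along a fibration is a weak equivalence, with $\bar{p}$ a fibration because it is an epimorphism), you prove that fact by hand via the kernel short exact sequences and the five lemma.
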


\begin{proof}
We consider the pullback square of dgas with the maps $\bar{p}$ and $h$ as before
\[
\xymatrix{ D' \ar[r]^{h'} \ar[d]_{p'} & \bar{D} \ar@{->>}[d]^{\bar{p}} \\
Q \ar[r]_{h}^\sim & P_0D.
}
\]
A pullback square of dgas is a pullback square of the underlying chain complexes.
The model category of chain complexes over $\Zp$ with the projective model structure is right proper. 
This means that a pullback of a weak equivalence along a fibration is again a weak equivalence. 
The vertical map $\bar{p}$ is an epimorphism and hence a fibration of chain compexes. Thus, $h'$ is a homology isomorphism. 

Further, $\bar{p}$ is an isomorphism in nonpositive degrees. Thus, $p'$ is too. Consequently,
\[
D_0'\cong Q_0=H_0(D)=\Zp.
\]
\end{proof}

\section{The main computation}\label{sec:main}

Our goal is to show that there is only one commutative dga over $\Zp$ whose homology and Massey products agree with the homotopy groups and Toda brackets of $\LS$ described in Section \ref{subsec:toda}. We are going to do so as follows: we construct a commutative dga $C$ with the right homology data. Assuming that there is an arbitrary commutative dga $D$ with the same homology data, we can construct a quasi-isomorphism of dgas from $C$ to $D$ from this data alone.

Let us now turn to constructing the test dga $C$.
Let $C$ be the differential graded algebra over $\Zp$ given by
\begin{eqnarray}
C = \Zp[x,x^{-1}] \otimes \Lambda_{\Zp}(e) \nonumber \\ \,\,\,\,\, |x|=2p-2, |e|=2p-3,\,\,\,\,\, d(x)=pe. \nonumber
\end{eqnarray}
This implies that
\[
d(x^m)=mpex^{m-1}=\epsilon p^{\nu(m)+1}ex^{m-1}
\]
where $\epsilon$ is a unit in $\Zp$.
Thus, the nontrivial homology groups of $C$ are given by
\[
H_{(2p-2)m-1}(C)=\mathbb{Z}/ p^{\nu(m)+1}\{ [ex^{m-1}]\} \cong \Zp/mp\Zp,\,\,\, m \in \mathbb{Z},\, m \neq 0
\]
and $\Zp$ in degrees $0$ and $-1$.

\begin{lemma}
Let $\gamma_m$, $m\neq0$ be the homology class of the element $-mex^{m-1}$, which has order $p$ in $H_{(2p-2)m-1}(C)$. Then the $\gamma_m$ satisfy the following Massey product relation.
\[
\left<\gamma_i,p,\gamma_j \right>= \gamma_{i+j}.
\]

Furthermore, the indeterminacy of this product is zero. 
\end{lemma}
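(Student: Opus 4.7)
The plan is a direct computation of the Massey product using the sign-decorated formula given in Section~\ref{subsec:toda}. Pick cycle representatives $a=-iex^{i-1}$ for $\gamma_i$, $b=p\cdot 1$ for the class of $p$ in $H_0(C)$, and $c=-jex^{j-1}$ for $\gamma_j$. Note that $|a|$ and $|c|$ are odd (since $|x|=2p-2$ is even and $|e|=2p-3$ is odd, so $|a|=(2p-2)i-1$) while $|b|=0$. Since $|x|$ is even, $x$ is graded central, so the Leibniz rule yields $d(x^m)=mpex^{m-1}$ for every integer $m$ (for $m<0$ this follows by differentiating $x^m x^{-m}=1$).

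With the sign convention of the paper, I would then produce the cochains $u,v$: the equation $d(u)=(-1)^{1+|a|}ab=ab=-ipex^{i-1}$ is solved by $u=-x^i$, while $d(v)=(-1)^{1+|b|}bc=-bc=jpex^{j-1}$ is solved by $v=x^j$. Both $u$ and $v$ have even degree. Substituting into the Massey product formula and using centrality of $x$ together with $e^2=0$, the two summands become
\[
(-1)^{1+|u|}uc=-(-x^i)(-jex^{j-1})=-jex^{i+j-1},\qquad
(-1)^{1+|a|}av=(-iex^{i-1})(x^j)=-iex^{i+j-1},
\]
whose sum is $-(i+j)ex^{i+j-1}$. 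This is precisely the cycle representing $\gamma_{i+j}$, giving the desired relation.

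For the indeterminacy, the general formula identifies it with $\gamma_i\cdot H_{|b|+|c|+1}(C)\oplus\gamma_j\cdot H_{|a|+|b|+1}(C)=\gamma_i\cdot H_{(2p-2)j}(C)\oplus\gamma_j\cdot H_{(2p-2)i}(C)$. The computation preceding the lemma shows that the only non-vanishing homology groups of $C$ sit in degrees $(2p-2)m-1$ for $m\neq 0$ and in degrees $0$ and $-1$. Since $i,j\neq 0$, the integers $(2p-2)i$ and $(2p-2)j$ are non-zero even numbers, hence of none of those forms, so both homology groups vanish and the indeterminacy is trivial.

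The only mildly delicate point, and the one I would keep a close eye on, is the sign bookkeeping: both the explicit $(-1)^{1+|?|}$ factors in the Massey product formula and the graded commutation of $x$ past $e$ conspire, and dropping or misplacing a sign would produce a wrong coefficient $(i-j)$ rather than $-(i+j)$. Beyond that, every step is a one-line Leibniz computation in the small algebra $\Zp[x,x^{-1}]\otimes\Lambda_{\Zp}(e)$.
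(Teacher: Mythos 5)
Your proposal is correct and follows essentially the same route as the paper's proof: the same representatives $a=-iex^{i-1}$, $b=p$, $c=-jex^{j-1}$, the same choices $u=-x^i$, $v=x^j$, the same evaluation of the bracket to $-(i+j)ex^{i+j-1}$, and the same degree argument for vanishing indeterminacy. Your explicit sign bookkeeping and the justification of $d(x^m)=mpex^{m-1}$ for negative $m$ are welcome additions but do not change the argument.
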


\begin{proof}
This can be computed directly using the definition of Massey products given in \ref{subsec:toda}. In this case, we have
\[
a= -iex^{i-1} \,\,\, b=p, \,\,\, c=-jex^{j-1}
\]
and thus $u=-x^i$, $v=x^j$. When verifying this one has to take great care of using the correct signs. We see that the element
\[
x^i(-jex^{j-1})-iex^{i-1}x^j=-(i+j)ex^{i+j-1}
\]
lies in the Massey product. The indeterminacy of the product is
\[
\gamma_iH_{(2p-2)j}(C)\oplus\gamma_jH_{(2p-2)i}(C)
\]
which is zero because the homology groups in degrees that are nonzero multiples of $(2p-2)$ are already zero.
\end{proof}

\bigskip
We are going to show that this homology information determines $C$ up to quasi-isomorphism. 

The aim is to show that there is a quasi-isomorphism of dgas from $C$ to $D$, where $D$ is any other commutative dga with $H_*(C) \cong H_*(D)$ and Massey product relations
\[
\left<\alpha_i,p,\alpha_j \right>= \alpha_{i+j}
\]
where $\alpha_i$ is an order $p$ element of $H_{(2p-2)i-1}(D)$. 

The beauty of the test dga $C$ is that its underlying commutative algebra is free- hence one can construct a map to another dga by just specifying the map on the algebra generators and show that the resulting map of algebras is compatible with the differentials. 
The first intuitive step when constructing such a quasi-isomorphism is to send homology generators of $C$ to homology generators of $D$. 

We first show that there is a quasi-isomorphism of chain complexes (rather than dgas) from $C$ to $D$ as this will make the argument of the main theorem \ref{th:quasiiso} much easier to follow.

\begin{lemma}\label{lem:quasiisochain}
There is a quasi-isomorphism of chain complexes $\varphi: C \longrightarrow D$.
\end{lemma}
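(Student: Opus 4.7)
The plan is to construct $\varphi$ by specifying its values on the $\Zp$-basis $\{x^m, ex^m : m \in \mathbb{Z}\}$ of the underlying chain complex of $C$ and then extending $\Zp$-linearly. Recall that in $C$ one has $d(ex^m)=0$ and $d(x^m)=mp\cdot ex^{m-1}$; in particular $H_*(C)$ is generated as a $\Zp$-module by the cycles $[1]$ and $\{[ex^m]\}_{m\in\mathbb{Z}}$, and each class $[ex^{m-1}]$ with $m\neq 0$ has finite order $p^{\nu(m)+1}$.

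First I would fix the given graded-algebra isomorphism $H_*(C)\cong H_*(D)$ and, for every $m\in\mathbb{Z}$, choose a cycle $\beta_m\in D$ of degree $|ex^m|$ representing the image of $[ex^m]$; set $\varphi(ex^m):=\beta_m$. I would set $\varphi(1):=1_D$, which is forced by the fact that the isomorphism on $H_0$ is a ring homomorphism. For each $m\neq 0$ the class $[\beta_{m-1}]$ has order $p^{\nu(m)+1}$ while $mp=\epsilon\cdot p^{\nu(m)+1}$ for a unit $\epsilon\in\Zp$, so $mp\cdot\beta_{m-1}$ is a boundary in $D$; I would pick any $\zeta_m\in D_{(2p-2)m}$ with $d(\zeta_m)=mp\cdot\beta_{m-1}$ and define $\varphi(x^m):=\zeta_m$.

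The verification that $\varphi$ is a chain map is then immediate on the basis: $d\varphi(ex^m)=0=\varphi(d(ex^m))$ by choice of $\beta_m$, and $d\varphi(x^m)=mp\cdot\beta_{m-1}=\varphi(d(x^m))$ by choice of $\zeta_m$ (with both sides vanishing when $m=0$). On homology, $\varphi$ sends $[1]$ to $[1]$ and $[ex^m]$ to $[\beta_m]$, recovering the prescribed isomorphism $H_*(C)\cong H_*(D)$, so $\varphi$ is a quasi-isomorphism of chain complexes. I do not expect any serious obstacle at the chain-complex level: the freeness of $C$ as a graded $\Zp$-algebra together with the fact that the torsion orders in $H_*(C)$ already encode the relation $d(x^m)=mp\cdot ex^{m-1}$ makes each step routine. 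The genuine difficulty, which is where the Massey product hypotheses will have to enter, arises only when one tries to upgrade $\varphi$ so that it respects the multiplicative structure, in Theorem \ref{th:quasiiso}.
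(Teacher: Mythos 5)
Your proof is correct and follows essentially the same route as the paper: define $\varphi$ on the monomial basis by sending $ex^{m-1}$ to a chosen cycle representing a generator of $H_{(2p-2)m-1}(D)$ and $x^m$ to a chosen preimage under $d$ of $mp$ times that cycle, which exists because $mp$ kills the class. The only cosmetic difference is that the paper extracts this preimage from the defining data of the Massey product $\left<\alpha_m,p,\alpha_m\right>$, while you observe directly that $mp\beta_{m-1}$ is a boundary since $[\beta_{m-1}]$ has order $p^{\nu(m)+1}$ dividing $mp$ --- the same underlying fact.
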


\begin{proof}
For this, we need to specify elements $$\varphi(ex^i) \in D_{(2p-2)(i+1)-1}\,\,\,\mbox{and}\,\,\, \varphi(x^i) \in D_{(2p-2)i}.$$ 
In other words, we have to find appropriate elements $a_i$ and $b_i$ in $D$ for which we can set
\[
\varphi(ex^{i-1}):= a_{i} \in D_{(2p-2)i-1}, 
\]
as well as sending $x^i$ to an element $$ \varphi(x^i):=b_i \in D_{(2p-2)i}.\,\,\, $$ To define a morphism of chain complexes, this choice has to be compatible with the differentials of $C$ and $D$.
We will now specify $a_i$ and $b_i$ for which the above has the desired properties.

For this to be a well-defined map of chain complexes we have to show that we can choose $a_i$ and $b_i$ to satisfy
\[
d(b_i)=ipa_i.
\]
First we look at the Massey product relation
\[
\left<\alpha_i,p,\alpha_i\right>=\alpha_{2i}.
\]
The element $\alpha_i$ has order $p$ in $H_{(2p-2)i-1}(D) = \mathbb{Z}/p^{\nu(i)+1}=\mathbb{Z}_p/ip\mathbb{Z}_p.$ So there must be an $a_i \in D_{(2p-2)i-1}$ with
\[
\alpha_i=[-i\cdot a_i].
\]
By definition of the Massey product there is an element ${b}_i \in H_{(2p-2)i}(D)$ such that $$d({b}_i)= ipa_i.$$ This $b_i $ is precisely the element we were looking for. 

We can conclude that
\[
\varphi: C \longrightarrow D
\]
defined via $\varphi(ex^{m-1})=a_m$ and $\varphi(x^m)=b_m$ with $a_m$ being a multiple of the homology generator and $b_m$ as above is a map of chain complexes. Obviously, it induces an isomorphism in homology as desired.
\end{proof}

We now continue by showing that the elements $a_m$ and $b_m$ can be chosen in such a way that $\varphi$ is multiplicative.

\begin{theorem}\label{th:quasiiso}
Let $D$ be a commutative dga whose homology and Massey products agree with those of $C$. Then there is a quasi-isomorphism of dgas $$\varphi: C \longrightarrow D.$$ 
\end{theorem}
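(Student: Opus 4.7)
The plan is to construct the dga map $\varphi : C \longrightarrow D$ by specifying its values on the multiplicative generators $x$, $x^{-1}$, $e$ of $C$, then to verify the quasi-isomorphism property via the Massey product hypothesis. Since $C$ is the commutative $\Zp$-algebra generated by $x$, $x^{-1}$, $e$ modulo $xx^{-1}=1$ (with $e^2=0$ automatic in a commutative $\Zp$-dga at odd $p$, because $|e|$ is odd and $2$ is invertible), a dga map $\varphi : C \to D$ amounts to choosing elements $a = \varphi(e) \in D_{2p-3}$, $b = \varphi(x) \in D_{2p-2}$, and $c = \varphi(x^{-1}) \in D_{-(2p-2)}$ satisfying $d(a)=0$, $d(b)=pa$, and $bc=1 \in D_0$; the identity $d(c) = -pac^2$ is then forced by the Leibniz rule applied to $d(bc)=0$. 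After applying Proposition \ref{prop:degreezero} to reduce to $D_0 = \Zp$, I will produce $a = a_1$ and $b = b_1$ exactly as in the proof of Lemma \ref{lem:quasiisochain} (using $\langle \alpha_1, p, \alpha_1\rangle = \alpha_2$), and symmetrically choose $a_{-1} \in D_{-(2p-1)}$ representing $\alpha_{-1}$ together with $b_{-1} \in D_{-(2p-2)}$ satisfying $d(b_{-1}) = -p a_{-1}$.

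The main obstacle is to show that the scalar $\lambda := b_1 b_{-1} \in D_0 = \Zp$ is a unit; granted this, setting $c := \lambda^{-1} b_{-1}$ achieves $b_1 c = 1$. The plan is to extract two expressions for the class $[b_1^2 a_{-1}] \in H_{2p-3}(D) = \mathbb{Z}/p$ from the Massey product data and equate them. First, a direct computation in $C$ shows $\langle \alpha_1, p, \alpha_{-1}\rangle = 0$ with vanishing indeterminacy (the relevant even-degree homology groups are zero); because $D_0 = \Zp$ has trivial outgoing differential, the matching Massey product in $D$ forces the sharp chain-level identity $a_1 b_{-1} = b_1 a_{-1}$. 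A short manipulation using commutativity then gives $\lambda a_1 = a_1 b_1 b_{-1} = b_1 a_1 b_{-1} = b_1 (b_1 a_{-1}) = b_1^2 a_{-1}$, so $[b_1^2 a_{-1}] = \lambda [a_1]$. Second, the Massey product $\langle \alpha_2, p, \alpha_{-1}\rangle = \alpha_1$, evaluated using the valid representative $a_2 := a_1 b_1$ (dictated by $\langle \alpha_1, p, \alpha_1\rangle = \alpha_2$) together with the choices $u = -b_1^2$ and $v = b_{-1}$, yields the representative $b_1^2 a_{-1} - 2\lambda a_1$, whose class equals $-[a_1]$; hence $[b_1^2 a_{-1}] = (2\lambda - 1)[a_1]$. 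Equating the two expressions in $\mathbb{Z}/p \cdot [a_1]$ forces $\lambda \equiv 1 \pmod{p}$, so $\lambda \in \Zp^\times$.

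With $\lambda$ a unit, set $c := \lambda^{-1} b_{-1}$ and extend $\varphi$ multiplicatively via $\varphi(x^m) = b_1^m$, $\varphi(x^{-m}) = c^m$, and $\varphi(e x^{m-1}) = a_1 b_1^{m-1}$ (with the convention $b_1^{-m} = c^m$). Differential compatibility is a routine Leibniz check, and $\varphi$ is a quasi-isomorphism because $[a_1 b_1^{m-1}]$ is identified with a generator of $H_{(2p-2)m-1}(D)$ by iterating the Massey product argument in Lemma \ref{lem:quasiisochain}. The heart of the proof is thus the two-pronged Massey product comparison that pins $\lambda$ down modulo $p$.
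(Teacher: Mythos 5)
Your proposal is correct and follows essentially the same route as the paper: define $\varphi$ on the algebra generators of $C$, use the brackets $\left<\alpha_1,p,\alpha_{\pm(i-1)}\right>$ to propagate along positive and negative powers, and use $\left<\alpha_2,p,\alpha_{-1}\right>=\alpha_1$ to force $\lambda=b_1b_{-1}\in\Zp$ to be a unit (indeed $\lambda\equiv 1 \bmod p$, matching the paper's $\theta$). The one small variation is that you obtain the chain-level identity $a_1b_{-1}=b_1a_{-1}$ from the vanishing bracket $\left<\alpha_1,p,\alpha_{-1}\right>=0$ together with the absence of boundaries in $D_{-1}$, whereas the paper derives the same identity from the Leibniz rule applied to $d(b_1b_{-1})=0$ and torsion-freeness of the cycles in $D_{-1}$; both arguments are valid once Proposition \ref{prop:degreezero} gives $D_0=\Zp$.
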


\begin{proof}

We would like to show that the quasi-isomorphism $\varphi$ of Lemma \ref{lem:quasiisochain} is multiplicative. Explicitly, we have identified elements $a_i$ and $b_i$ in $D$ with
\[
\varphi(ex^i)=a_i, \,\,\,\,\,\varphi(x^i)=b_i \,\,\,\,\,\mbox{for}\,\,i \in \mathbb{Z}.
\]
For simplicity, we set 
\[
a:=a_1 \,\,\,\mbox{and}\,\,\, b:=b_1.
\]
To show that $\varphi$ is multiplicative, we have to show that we can pick the elements $a_i$ and $b_i$ in the proof of Lemma \ref{lem:quasiisochain} to be 
\[
a_i=ab^{i-1}\,\,\,\mbox{and}\,\,\,b_i=b^i \,\,\,\mbox{for}\,\,i \in \mathbb{Z}.
\]
We will do so in the following steps.
\begin{enumerate}
\item {\bf the nonnegative part:} show that $a_i=ab^{i-1}$ and $b_i=b^i$ for $i \ge 1$.
\item {\bf the nonpositive part:} identify an ``exterior'' generator $\bar{a} \in D_{-2p+1}$ and a ``polynomial'' generator $\bar{b}\in D_{-2p+2}$ and show that $b_{-i}=\bar{b}^i$ and $a_{-i}=\bar{a}\bar{b}^{i-1}$ for $i \ge 1$.
\item {\bf merging A:} As $D_0=\Zp$, we know that $d(b\bar{b})=\theta \in \Zp$. We use this to show that $\bar{a}b=a\bar{b}.$
\item {\bf merging B}: use a Massey product relation to show that this number $\theta$ is a unit and hence $\bar{b}=b^{-1}$.
\end{enumerate}

{\bf The nonnegative part:} We are going to prove inductively that we can pick the elements $a_i$ and $b_i$ to be 
\[
a_i=ab^{i-1} \,\,\,\mbox{and}\,\,\, b_i=b^i \,\,\,\mbox{for}\,\, i \ge 1.
\]
For this we need to show
\begin{itemize}
\item The element $ab^{i-1}$ is a cycle. Its homology class is a generator for the cyclic group $H_{(2p-2)i-1}(D)=\mathbb{Z}/p^{\nu(i)+1}.$
\item The differential on $b^i$ is $d(b^i)=ipab^{i-1}$.
\end{itemize}
The start of our induction is easy as this is exactly how we picked $a_1$ and $b_1$ in Lemma \ref{lem:quasiisochain}: $a=a_1$ gives a generator of the cyclic group $H_{2p-3}(D)=\mathbb{Z}/p$ and $d(b)=d(b_1)=pa_1=pa.$ This also shows the second bullet point that $d(b^i)=piab^{i-1}.$

Again, by $\alpha_i$ we denote the order $p$ element 
\[
\alpha_i=[-ia_i] \in H_{(2p-2)i-1}(D)\cong\mathbb{Z}/p^{\nu(i)+1}.
\]
Now let us assume that we can choose
\[
a_{i-1}=ab^{i-2} \,\,\,\mbox{and}\,\,\, b_{i-1}=b^{i-1} \,\,\,\mbox{for}\,\,i \ge 2.
\]
The given Massey product relations tell us that
\[
\alpha_i=\left<\alpha_1, p, \alpha_{i-1}\right>
\]
which by the definition of Massey products using the explicit representatives equals
\[
\alpha_i=[-(i-1)a_{i-1}b-ab_{i-1}].
\]
Using our induction assumption we have
\[
\alpha_i=[-(i-1)ab^{i-2}b-ab^{i-1}]=[-iab^{i-1}].
\]
This means that $[-iab^{i-1}]$ is an element of order $p$ in $H_{(2p-2)i-1}(D)$ and that consequently, $[ab^{i-1}]$ is a generator of this homology group, which is what we wanted to show.

\bigskip
{\bf The nonpositive part:} This is very similar to the proof concerning the nonnegative part. Using the notation of Lemma \ref{lem:quasiisochain} we set
\[
\bar{a}:=a_{-1}\,\,\,\mbox{and}\,\,\, \bar{b}=b_{-1},
\]
i.e. $\bar{a}$ is the cycle generating $H_{-2p+1}(D)=\mathbb{Z}/p$ and $\bar{b}$ is a nonzero element in $D_{-2p+2}$ with $d(\bar{b})=-p\bar{a}.$ We will show that we can pick
\[
a_{-i}=\bar{a}\bar{b}^{i-1}\,\,\,\mbox{and}\,\,\,b_{-i}=\bar{b}^i\,\,\,\mbox{for}\,\, i \ge 1.
\]
Assume that we already know that 
\[
a_{-(i-1)}=\bar{a}\bar{b}^{i-2}\,\,\,\mbox{and}\,\,\,b_{-(i-1)}=\bar{b}^{i-1}\,\,\,\mbox{for}\,\, i \ge 2.
\]
By assumption we have the Massey product relation
\[
\alpha_{-i}=\left<\alpha_{-1},p,\alpha_{-(i-1)}\right>=[ia_{-i}].
\]
By definition this equals
\[
\alpha_{-i}=[(i-1)\bar{a}\bar{b}^{i-2}\bar{b}+\bar{a}\bar{b}^{i-1}]=[i\bar{a}\bar{b}^{i-1}].
\]
Analogously to the previous argument this means that $\bar{a}\bar{b}^{i-1}$ is a generator of $H_{-(2p-2)i-1}(D)$ and thus we can pick 
\[
a_{-i}=\bar{a}\bar{b}^{i-1}\,\,\,\mbox{and}\,\,\,b_{-i}=\bar{b}^i\,\,\,\mbox{for}\,\, i \ge 1
\]
as desired.

\bigskip
{\bf Merging A:}
The Postnikov argument of Proposition \ref{prop:degreezero} showed that without loss of generality we have $D_0=\Zp$ generated by the unit of $D$. Thus, multiplying the positive and negative polynomial generators of the previous two steps gives
\[
b \bar{b}=\theta \in \Zp.
\]
We use this to get some useful relations between the positive and negative parts of $D$. First of all, this gives us
\[
d(b \bar{b})=0
\]
but also 
\[
d(b\bar{b})=d(b)\bar{b}+bd(\bar{b})=pa\bar{b}-p\bar{a}b,
\]
so
\begin{equation}\label{eq:minusone}
pa\bar{b}=p\bar{a}b \in D_{-1}. 
\end{equation}
The elements $a\bar{b}$ and $\bar{a}b$ are cycles in $D_{-1}$. As the differential $$d:D_0=\Zp \longrightarrow D_{-1}$$ is trivial, the group of cycles in $D_{-1}$ is already $H_{-1}(D)=\Zp$ which has no zero-divisors. Thus, it follows from (\ref{eq:minusone}) that
\[
a\bar{b}=\bar{a}b,
\]
which will be a key ingredient to our next step.

\bigskip
{\bf Merging B:} To show that our quasi-isomorphism $\varphi:C \longrightarrow D$ with $\varphi(x)=b_1=b$ and $\varphi(x^{-1})=b_{-1}=\bar{b}$ from Lemma \ref{lem:quasiisochain} is multiplicative, we have to show that $b\bar{b}=1$. We know that $b\bar{b}=\theta$ for some $\theta \in \Zp$. It suffices to show that $\theta$ is a unit, i.e. not divisible by $p$. We do so by exploiting the given Massey product relation
\[
\alpha_1=\left<\alpha_2, p, \alpha_{-1}\right>.
\]
We know that $$\alpha_2=[-2ab], \alpha_{-1}=[\bar{a}]\,\,\,\mbox{and}\,\,\, \alpha_1=[-a].$$ Putting this into the defining equation of the Massey product yields
\begin{multline}
\left<\alpha_2, p, \alpha_{-1}\right>= [-2ab\bar{b}+\bar{a}b^2] = [-2a\theta+\bar{a}b^2] \nonumber\\=[-2a\theta + a \bar{b}b]=[-2a\theta + a\theta]=\theta[-a].\nonumber
\end{multline}
The second and fourth equality use that $b \bar{b}=\theta$, and the third equality uses the previously proved relation $\bar{a}b=a\bar{b}$. But we also know that 
\[
\left<\alpha_2, p, \alpha_{-1}\right>=[-a]
\]
by assumption, hence $\theta[-a]=[-a].$ As $[-a]$ is a nonzero element of order $p$, this is only possible if $\theta \in \Zp$ is a unit, which is what we wanted to show.

\bigskip
To summarise, we now know that
\[
a_i=ab^{i-1}, \,\,\, b_i=b^{i}, \,\,\, a_{-i}=\bar{a}\bar{b}^{i-1}, \,\,\,  b_{-i}=\bar{b}^{i}\,\, (\mbox{all for $i \ge 1$}), \,\,\, \bar{b}=b^{-1}\,\,\,\mbox{and}\,\,\,\bar{a}=ab^{-2}
\]
and thus that $$a_i=ab^{i-1}\,\,\,\mbox{and}\,\,\, b_i=b^i \,\,\,\mbox{for}\,\,\, i \in \mathbb{Z}.$$ This concludes the proof that the map of chain complexes given by $\varphi(e)=a$ and $\varphi(x)=b$ is a quasi-isomorphism of dgas.

\end{proof}

\section{Conclusion and directions}\label{sec:conclusions}

\subsection{Uniqueness conclusions}

In Theorem \ref{th:quasiiso} we showed that any commutative dga whose homology and Massey products agree with the homotopy groups and Toda brackets of $\LS$ is quasi-isomorphic to the dga $C$ given by  
\begin{eqnarray}
C = \Zp[x,x^{-1}] \otimes \Lambda_{\Zp}(e) \nonumber \\ \,\,\,\,\, |x|=2p-2, |e|=2p-3,\,\,\,\,\, d(x)=pe. \nonumber
\end{eqnarray}
Together with Theorem \ref{thm:premonoid} we can conclude the following.

\begin{corollary}
There is at most one pre-monoidal algebraic model category for $\Ho((\LS)\mbox{-mod})$. 
\qed
\end{corollary}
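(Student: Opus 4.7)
The plan is to assemble the corollary from the ingredients already stated in the paper, essentially repackaging Theorem \ref{thm:premonoid} into the ``at most one up to Quillen equivalence'' form. Suppose we are given two pre-monoidal algebraic model categories $\mathcal{C}$ and $\mathcal{D}$ whose homotopy categories admit pre-monoidal triangulated equivalences with $\Ho((\LS)\mbox{-mod})$; I will produce a zig-zag of Quillen equivalences between them.

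First I would invoke the set-up of Section~\ref{subsec:toda}: the left hand side has $\LS$ as a compact generator, and each pre-monoidal equivalence $\Phi:\Ho((\LS)\mbox{-mod})\to\Ho(\mathcal{C})$, respectively $\Psi:\Ho((\LS)\mbox{-mod})\to\Ho(\mathcal{D})$, sends $\LS$ to a compact generator $X\in\mathcal{C}$, respectively $Y\in\mathcal{D}$ (after cofibrant-fibrant replacement). By the Schwede--Shipley theorem \ref{thm:morita}, $\mathcal{C}$ is Quillen equivalent to modules over $\End(X)$ and $\mathcal{D}$ is Quillen equivalent to modules over $\End(Y)$. Since both target model categories are algebraic, Theorem \ref{thm:algebraicmorita} identifies these ring spectra with the Eilenberg--MacLane ring spectra $EML(\mathcal{C}(X,X))$ and $EML(\mathcal{D}(Y,Y))$ of the endomorphism dgas.

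Next I would use the pre-monoidal hypothesis to guarantee commutativity of these dgas, as explained in Section~\ref{sec:commutative}: because a pre-monoidal functor preserves units, $X$ and $Y$ are the monoidal units of $\mathcal{C}$ and $\mathcal{D}$, and the endomorphism dga of the unit of a monoidal algebraic model category is commutative. Then Lemma \ref{lem:homology} tells us that $\mathcal{C}(X,X)$ and $\mathcal{D}(Y,Y)$ both realise $\pi_*(\LS)$ as their homology algebra and carry Massey products matching the Toda brackets recorded in Section~\ref{subsec:toda}. Theorem \ref{th:quasiiso} (the main computation) now applies to both dgas, producing quasi-isomorphisms to the model dga $C$, and hence, via a zig-zag, a quasi-isomorphism $\mathcal{C}(X,X)\simeq \mathcal{D}(Y,Y)$.

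Finally I would translate back: quasi-isomorphic commutative dgas have weakly equivalent Eilenberg--MacLane ring spectra (the functoriality property of $EML$ recalled in Section~\ref{subsec:algebraic}), and by \cite[Theorem 4.3]{SchShi00} weakly equivalent ring spectra have Quillen equivalent module categories. Stitching this together with the two Morita Quillen equivalences yields a chain
\[
\mathcal{C} \simeq_Q EML(\mathcal{C}(X,X))\mbox{-mod} \simeq_Q EML(\mathcal{D}(Y,Y))\mbox{-mod} \simeq_Q \mathcal{D},
\]
which establishes the corollary. The only real content is the dga uniqueness (Theorem \ref{th:quasiiso}) and the observation that pre-monoidality forces the endomorphism dga of the chosen generator to be commutative; everything else is a formal concatenation of previously cited Quillen equivalences, so no step is a genuine obstacle once those two ingredients are in hand.
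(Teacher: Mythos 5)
Your argument is correct and is essentially the paper's own: the corollary is just a repackaging of Theorem \ref{thm:premonoid}, whose proof is exactly the chain you describe --- Morita theory (Theorems \ref{thm:morita} and \ref{thm:algebraicmorita}), pre-monoidality forcing the generators to be units and hence the endomorphism dgas to be commutative, Lemma \ref{lem:homology} matching homology and Massey products with $\pi_*(\LS)$ and its Toda brackets, Theorem \ref{th:quasiiso} giving the quasi-isomorphisms to $C$, and \cite[Theorem 4.3]{SchShi00} to pass back to Quillen equivalences of module categories. No divergence from the paper's route and no gaps.
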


\begin{corollary}
Any algebraic model category whose homotopy category is pre-monoidally equivalent to $\Ho((\LS)\mbox{-mod})$ is Quillen equivalent to the category of dg-modules over $C$ with $C$ given above.
\qed
\end{corollary}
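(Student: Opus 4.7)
The plan is to combine the main computation in Theorem \ref{th:quasiiso} with the Morita-theoretic machinery of Section \ref{sec:background}. Let $\mathcal{D}$ be an algebraic model category equipped with a pre-monoidal triangulated equivalence $\Psi: \Ho((\LS)\mbox{-mod}) \longrightarrow \Ho(\mathcal{D})$, and set $Y := \Psi(\LS)$. Since $\Psi$ is pre-monoidal it carries the monoidal unit $\LS$ to the monoidal unit of $\mathcal{D}$, so after a cofibrant-fibrant replacement $Y$ is a compact generator of $\Ho(\mathcal{D})$; moreover, because $Y$ is the unit of a monoidal structure, the discussion in Section \ref{sec:commutative} tells us that its endomorphism dga $\mathcal{D}(Y,Y)$ is commutative.

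By Lemma \ref{lem:homology}, the homology algebra and Massey products of $\mathcal{D}(Y,Y)$ agree with $\pi_*(\LS)$ and its Toda brackets respectively. The hypotheses of Theorem \ref{th:quasiiso} are therefore met, producing a quasi-isomorphism of commutative dgas $\varphi: C \longrightarrow \mathcal{D}(Y,Y)$ where $C = \Zp[x,x^{-1}] \otimes \Lambda_{\Zp}(e)$ is the specific dga constructed in Section \ref{sec:main}.

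To conclude I would transport this quasi-isomorphism to a Quillen equivalence on module categories and chain it with algebraic Morita theory. Theorem \ref{thm:algebraicmorita} supplies a Quillen equivalence $\mathcal{D} \simeq_Q EML(\mathcal{D}(Y,Y))\mbox{-mod}$; Shipley's monoidal comparison \cite{Shi07} recalled in Section \ref{subsec:algebraic} gives a further Quillen equivalence between $EML(\mathcal{D}(Y,Y))\mbox{-mod}$ and the category of dg-modules over $\mathcal{D}(Y,Y)$; and the quasi-isomorphism $\varphi$ induces a Quillen equivalence between dg-modules over $C$ and dg-modules over $\mathcal{D}(Y,Y)$. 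Composing these three yields the desired Quillen equivalence $\mathcal{D} \simeq_Q C\mbox{-dg-mod}$.

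The main subtlety, rather than a genuine computational obstacle, is ensuring that the pre-monoidal assumption is strong enough to force $\mathcal{D}(Y,Y)$ to be commutative on the nose, as opposed to merely $E_\infty$ in some up-to-homotopy sense. This is exactly what the discussion in Section \ref{sec:commutative} is designed to handle: a pre-monoidal functor must send units to units, so $Y$ is itself a strict unit of $\mathcal{D}$ and the braiding descends to commutativity of the endomorphism dga. Once this is in place, the remainder of the argument is a routine assembly of Quillen equivalences.
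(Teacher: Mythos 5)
Your proposal is correct and follows essentially the same route as the paper's (implicit) argument: pre-monoidality forces the generator to be the unit so that its endomorphism dga is commutative, Lemma \ref{lem:homology} identifies its homology and Massey products with $\pi_*(\LS)$ and its Toda brackets, Theorem \ref{th:quasiiso} then gives a quasi-isomorphism to $C$, and the Quillen equivalence is assembled from Theorem \ref{thm:algebraicmorita}, Shipley's comparison of $H\mathbb{Z}$-algebra module spectra with dg-modules, and the invariance of module categories under quasi-isomorphism. The paper leaves this chain as an immediate consequence (hence the \qed with no written proof), and your assembly of the same ingredients is accurate.
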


Unfortunately, we do not know if there actually exists a pre-monoidal equivalence
\[
\Phi: \Ho((\LS)\mbox{-mod}) \longrightarrow \Ho(C\mbox{-mod}).
\]
If the answer to that question is no, then we can conclude that there is simply no pre-monoidal algebraic model of $\Ho((\LS)\mbox{-mod})$ at all. 

\subsection{The commutativity issue}

Can we actually do without the commutativity assumption on our dga? If yes, then we can prove that there is at most one algebraic model for $\Ho((\LS)\mbox{-mod})$ without the pre-monoidality assumption, recall the discussion in Section \ref{sec:commutative}. For that, a possible strategy would be to find an example of a non-commutative dga with the right homology and Massey products and show that it is unique up to quasi-isomorphism analogously to Theorem \ref{th:quasiiso}. However, so far we have not been able to produce a non-commutative dga with the correct data- it is not clear if any non-commutative examples exist which would not be quasi-isomorphic to our (commutative) test dga $C$.

\subsection{Further chromatic steps}

The category $L_{K(1)}\mathbb{S}$-modules is \emph{not} Quillen equivalent to $K(1)$-local spectra $L_{K(1)}\Sp=L_{K(1)}(\mathbb{S}\mbox{-mod})$. For $E(1)$-local spectra the situation is different as $E(1)$-localisation is smashing- localising with respect to $E(1)$ is the same as localising with respect to the $E(1)$-local sphere. Thus, the model category of $E(1)$-local spectra is Quillen equivalent to the category of $L_{E(1)}\mathbb{S}$-modules. However, this does not hold true for $K(1)$. The $K(1)$-local stable homotopy category does possess a compact generator, even though it is not the sphere: the mod-$p$ Moore spectrum $L_{K(1)}M$ \cite[Section 3.2]{SchShi03}. So a way of showing uniqueness of algebraic models for $\Ho(L_{K(1)}\Sp)$ would be showing that there is only one dga modelling the endomorphisms and Toda brackets of $L_{K(1)}M$. However, it is very hard to get a grasp on the necessary computations. For example, the homology of such a dga would not even be commutative as the element $v_1$ in $[M, M]^{L_{K(1)}\Sp}_{2p-2}$ is not central \cite{CraKna88}.

\bigskip

It would be interesting to see if the computations from this paper using the data of $\LS$ could be used to prove an analogous result for the $E(1)$-local sphere instead. That would lead to pre-monoidal uniqueness of algebraic models for $\Ho((L_1\mathbb{S})\mbox{-mod})$ and hence for $\Ho(L_1\Sp)$. An algebraic model for $\Ho(L_1\Sp)$ exists in Franke's model \cite{Fra96}, see also \cite{Roi08}, which by \cite{Gan07} and \cite{BarRoi11a} is even pre-monoidally equivalent to $\Ho(L_1\Sp)$. If we could extend Theorem \ref{th:quasiiso} to $L_1\mathbb{S}$, then this would show that Franke's algebraic model is indeed the only algebraic model for $\Ho(L_1\Sp)$ that retains some monoidal information.

The homotopy groups of $L_1\mathbb{S}$ are not that different from $\pi_*(\LS)$- they only differ by rational parts in degrees -1 and -2.
Those two spheres are closely related by the homotopy pullback square
\[
\xymatrix{ L_1 \mathbb{S} \ar[d]\ar[r]& L_{K(1)} \mathbb{S}\ar[d] \\
L_\mathbb{Q} \mathbb{S} \ar[r] & L_\mathbb{Q} L_{K(1)}\mathbb{S} 
}
\]
\cite[3.9]{Dwy04}. Unfortunately, performing an analogous pullback of dgas with our test dga $C$ does not even give a test dga for the $E(1)$-local case: the resulting dga $C'$ has the wrong multiplicative structure. The multiplication
\[
\pi_{(2p-2)i-1}(L_1\mathbb{S})\otimes \pi_{-(2p-2)i-1}(L_1\mathbb{S}) \longrightarrow \pi_{-2}(L_1\mathbb{S})
\]
is injective \cite[Theorem 8.2.10(d)]{Rav86} whereas the multiplication in the homology of the pullback
\[
H_{(2p-2)i-1}(C') \otimes H_{-(2p-2)i-1}(C') \longrightarrow H_{-2}(C') 
\]
is not. This is to be expected- performing any sort of homological localisation on an algebraic model category will almost always lead to trivial results \cite{BarRoi13b}. Hence the homotopical behaviour of algebraic model categories in this context is going to remain a subject of future research.

\bibliographystyle{plain}

\begin{thebibliography}{BKS05}

\bibitem{Ada66}
J.~F. Adams.
\newblock On the groups {$J(X)$}. {IV}.
\newblock {\em Topology}, 5:21--71, 1966.

\bibitem{BenKraSch05}
D.~Benson, H.~Krause, and S.~Schwede.
\newblock Introduction to realizability of modules over {T}ate cohomology.
\newblock In {\em Representations of algebras and related topics}, volume~45 of
  {\em Fields Inst. Commun.}, pages 81--97. Amer. Math. Soc., Providence, RI,
  2005.

\bibitem{BarRoi11b}
D.~Barnes and C.~Roitzheim.
\newblock Local framings.
\newblock {\em New York J. Math.}, 17:513--552, 2011.

\bibitem{BarRoi11a}
D.~Barnes and C.~Roitzheim.
\newblock Monoidality of franke's exotic model.
\newblock {\em Adv. Math.}, 228:3223--3248, 2011.

\bibitem{BarRoi13b}
D.~Barnes and C.~Roitzheim.
\newblock Homological localisation of model categories.
\newblock {\em \texttt{http://arxiv.org/abs/1212.2022}}, 2013.

\bibitem{BarRoi13}
D.~Barnes and C.~Roitzheim.
\newblock Stable left and right {B}ousfield localisation.
\newblock {\em Glasgow Math. J.}, 2013.

\bibitem{CraKna88}
M.~Crabb and K.~Knapp.
\newblock Central {A}dams operators.
\newblock {\em Manuscripta Math.}, 60(2):131--137, 1988.

\bibitem{DwySpa95}
W.~G. Dwyer and J.~Spali{\'n}ski.
\newblock Homotopy theories and model categories.
\newblock In {\em Handbook of algebraic topology}, pages 73--126.
  North-Holland, Amsterdam, 1995.

\bibitem{DugShi07b}
D.~Dugger and B.~Shipley.
\newblock Enriched model categories and an application to additive endomorphism
  spectra.
\newblock {\em Theory Appl. Categ.}, 18:No. 15, 400--439 (electronic), 2007.

\bibitem{DugShi07}
D.~Dugger and B.~Shipley.
\newblock Topological equivalences for differential graded algebras.
\newblock {\em Adv. Math.}, 212(1):37--61, 2007.

\bibitem{Dug06}
D.~Dugger.
\newblock Spectral enrichments of model categories.
\newblock {\em Homology, Homotopy Appl.}, 8(1):1--30 (electronic), 2006.

\bibitem{Dwy04}
W.~G. Dwyer.
\newblock Localizations.
\newblock In {\em Axiomatic, enriched and motivic homotopy theory}, volume 131
  of {\em NATO Sci. Ser. II Math. Phys. Chem.}, pages 3--28. Kluwer Acad.
  Publ., Dordrecht, 2004.

\bibitem{Fra96}
J.~Franke.
\newblock Uniqueness theorems for certain triangulated categories possessing an
  adams spectral sequence.
\newblock {\em \texttt{http://www.math.uiuc.edu/K-theory/0139/}}, 1996.

\bibitem{Gan07}
N.~Ganter.
\newblock Smash product of {${{E}}(1)$}-local spectra at an odd prime.
\newblock {\em Cah. Topol. G\'eom. Diff\'er. Cat\'eg.}, 48(1):3--54, 2007.

\bibitem{Hov99}
M.~Hovey.
\newblock {\em Model categories}, volume~63 of {\em Mathematical Surveys and
  Monographs}.
\newblock American Mathematical Society, Providence, RI, 1999.

\bibitem{HopSm98}
M.J. Hopkins and J.H. Smith.
\newblock Nilpotence and stable homotopy theory. {II}.
\newblock {\em Ann. of Math. (2)}, 148(1):1--49, 1998.

\bibitem{Hut12}
K.~Hutschenreuter.
\newblock On rigidity of the ring spectra ${P}_m{S}_{(p)}$ and $ko$.
\newblock {\em \texttt{http://hss.ulb.uni-bonn.de/2012/2974/2974.htm}}, 2012.

\bibitem{Kel94}
B.~Keller.
\newblock Deriving {DG} categories.
\newblock {\em Ann. Sci. \'Ecole Norm. Sup. (4)}, 27(1):63--102, 1994.

\bibitem{Pat12}
I.~Patchkoria.
\newblock On the algebraic classification of module spectra.
\newblock {\em Algebr. Geom. Topol.}, 12(1):2329 -- 2388, 2012.

\bibitem{Rav86}
D.C. Ravenel.
\newblock {\em Complex cobordism and stable homotopy groups of spheres}, volume
  121 of {\em Pure and Applied Mathematics}.
\newblock Academic Press Inc., Orlando, FL, 1986.

\bibitem{Rav92}
D.C. Ravenel.
\newblock {\em Nilpotence and periodicity in stable homotopy theory}, volume
  128 of {\em Annals of Mathematics Studies}.
\newblock Princeton University Press, Princeton, NJ, 1992.
\newblock Appendix C by Jeff Smith.

\bibitem{Roi08}
C.~Roitzheim.
\newblock On the algebraic classification of {$K$}-local spectra.
\newblock {\em Homology, Homotopy Appl.}, 10(1):389--412, 2008.

\bibitem{Sch07}
S.~Schwede.
\newblock The stable homotopy category is rigid.
\newblock {\em Ann. of Math. (2)}, 166(3):837--863, 2007.

\bibitem{Shi02}
B.~Shipley.
\newblock An algebraic model for rational {$S^1$}-equivariant stable homotopy
  theory.
\newblock {\em Q. J. Math.}, 53(1):87--110, 2002.

\bibitem{Shi07}
B.~Shipley.
\newblock {$H\Bbb Z$}-algebra spectra are differential graded algebras.
\newblock {\em Amer. J. Math.}, 129(2):351--379, 2007.

\bibitem{SchShi00}
S.~Schwede and B.~Shipley.
\newblock Algebras and modules in monoidal model categories.
\newblock {\em Proc. London Math. Soc. (3)}, 80(2):491--511, 2000.

\bibitem{SchShi03}
S.~Schwede and B.~Shipley.
\newblock Stable model categories are categories of modules.
\newblock {\em Topology}, 42(1):103--153, 2003.

\end{thebibliography}

\end{document}